\documentclass[11pt]{article}
\usepackage[utf8]{inputenc}
\usepackage{geometry}
\usepackage{enumitem}
\setlist{nosep}

\usepackage{xspace}

\usepackage[skip=4pt, indent=30pt]{parskip}
\usepackage{xcolor}
\usepackage{amsfonts}
\usepackage{bbold} % to be able to do mathbb{1}
\usepackage{amsmath}
\usepackage{esvect} % overhead arrow
\usepackage{ amssymb }
\usepackage{ mathrsfs } % for pretty callygraphy letters
\DeclareFixedFont{\titlefont}{T1}{ppl}{b}{n}{0.5in}
\usepackage[T1]{fontenc}
\usepackage{libertine} % for bolded textsc
\usepackage{subcaption}
\usepackage{physics} % for \order

\RequirePackage{tikz}
\usepackage{pgf}
\usepackage{graphicx}

\usepackage[ruled, linesnumbered]{algorithm2e}

\graphicspath{{}{timeline/}}

\usepackage{amsthm}
\usepackage{thmtools}

\usepackage{hyperref}
\usepackage{cleveref}

\declaretheorem{theorem}
\declaretheorem{definition}

\declaretheorem{lemma}
\declaretheorem{assumption}

\renewcommand{\epsilon}{\varepsilon}
\renewcommand{\phi}{\varphi}
\renewcommand{\geq}{\geqslant}
\renewcommand{\leq}{\leqslant}

\newcommand{\R}{\mathbb{R}}
\newcommand{\N}{\mathbb{N}}
\newcommand{\E}{\mathbb{E}}
\newcommand{\pr}{\mathbb{P}}
\newcommand{\one}{\mathbb{1}}
\newcommand{\cX}{\mathcal{X}}
\newcommand{\XP}{\mathcal{X}_{\mbox{\tiny Pre}}}
\newcommand{\XNP}{\mathcal{X}_{\mbox{\tiny NonPre}}}

\usetikzlibrary{decorations.pathreplacing}
\usetikzlibrary{positioning, arrows.meta}
\usetikzlibrary{shapes.geometric, calc}

\begin{document}

\title{Scheduling jobs with unknown size distribution in a M/G/1 queue: the shifted empirical Gittins}

\author{
  {Nicolas Gast$^1$, Bruno {Gaujal$^1$} and Adrien Obrecht$^2$}, \\
  {1. Univ.\ Grenoble Alpes, Inria, CNRS, LIG, F-38000 Grenoble, France.}\\
  {2. ENS de Lyon, Lyon, France.}
}

\maketitle

\abstract{In this paper we consider a M/G/1 queue for which we want to minimize the expected response time. We show  how to compute indices from $n$ samples of the job size distribution such that the corresponding index policy  is asymptotically optimal as $n$ grows.
  This construction is based on a discretization of the  bounded support of the job size distribution and a  shift of the samples to their nearest discrete point to the right.
  We  show that the Gittins index  of the empirical distribution of these shifted samples is close to the Gittins index  of the original distribution. This translates to the asymptotic optimality of the corresponding index policy for minimizing the expected response time. Numerical comparison with other approaches further confirm  the efficiency of our approach.}

\section{Introduction}

In this paper, we focus on the problem of scheduling jobs  in a M/G/1 queue to minimize their  mean response time under the following conditions.
\begin{itemize}
\item All jobs have the same size distribution (technical conditions on the distribution will be  discussed later),
\item jobs arrive according to a Poisson process,
  \item jobs are executed  by a single server that can switch from one job to another instantaneously.
  \end{itemize}

 This problem has drawn a lot of interest recently for several reasons. Indeed, the solutions under different information levels on the job sizes are quite elegant and they are enlightening on what features are crucial to obtain small response times.
 \begin{itemize}
 \item 
 In the {\it clairvoyant case} (jobs announce their size when they arrive in the
 queue), the optimal scheduling policy is SRPT (Smallest Remaining Processing Time) (see \cite{SRPT} for a proof).
 
 \item In the {\it non-clairvoyant case}, (the size of the job is only revealed at completion time), the optimal scheduling policy is the famous Gittins index policy (see \cite {Scully2021} for a detailed proof) when the  distribution of the job sizes is known in advance.

\item In this paper,  we consider the {\it non clairoyant black box case}: The size of the job is only known at completion time and the distribution of the job sizes is also unknown.

\end{itemize}

Our approach to solve this problem is  of the ``{\it explore then commit}'' type: We first  observe $n$ samples of job sizes and infer a scheduling policy that becomes asymptotically optimal as the number of samples $n$ goes to infinity.
 The main challenge comes from the fact that the empirical distribution induced by  $n$ samples  is made of $n$ atoms all this equal probability. This is quite distinct from the actual distribution of jobs when it is continuous  w.r.t. the Lebesgue measure.
 To overcome this difficulty, one may infer an empirical density from the $n$ samples, for example using a Gaussian kernel (see \cite{Rudemo} for example) and compute the Gittins indexes for this estimated density. Unfortunately, the special shape of the Gittins indexes is highly sensitive to the distribution and this direct approach is not very efficient, as seen in our numerical tests (Section \ref{sec:numerical}).
 In this paper, we use a different approach by introducing a transformation on the samples that we call the ``right shift'' that is compatible with the way index policies work and this is a key ingredient in the proof of our main result, namely asymptotic optimality of the learned index policy.

When finalizing this manuscript, we found that the paper \cite{ramakrishna2026empirical} has just been published.  This paper provides a convergence analysis of the empirical Gittins policy without the need to resort to a shifted index.
 
 \section{Problem description}

We consider jobs that arrive in an online fashion on a queue with a single processing node,
with  an associated execution time (denoted $X_i$ for job number $i$). At each given point, the processor
can execute a single job at speed one. Once a
job $i$ has been executed for  $X_i$ units of time, it is considered finished and leaves the system.
We denote by $T_i$ the response time of job $i$,  namely the difference between its completion time and its arrival time.  Our
goal will  to minimize the expected mean response time of the jobs.

% \begin{definition}[Shortest Remaining Processing Time (SRPT)]
%     The SRPT policy schedules the job with the shortest remaining processing
%     time.
% \end{definition}

% % TODO refs
% \begin{theorem}
%     SRPT is a policy that minimises the expected mean response time.
% \end{theorem}

% When the execution time is known for each job,  the very simple SRPT policy
% is optimal. However most of the time, we don't have such precise information
% about about a job. We consider the model where we only know the age of a job,
% i.e. the total cumulated time it has been executed.

We also suppose that jobs
arrive following a Poisson arrival process of rate $\lambda$ and that their
size $X$ follows a  distribution $\phi$. %  with finite mean denoted $1/\mu$. The
% utilization $ \lambda / \mu$ is smaller than 1: $ \rho := \lambda / \mu < 1$, so
% that the queue is stable. We also assume that the job size has finite second moment: $\E(X^2) < \infty$.
We denote by  $f$ the probability density  function (PDF), when it exists,  by  $F$ the cumulative distribution function (CDF) and by $\bar{F}$ the complementary  CDF ($\bar{F} := 1-F$) of the job size.
 % In general $\phi$ does not have a density (it may contain atoms) and $F$ is not continuous.
The processor works at speed one and can only execute one job at a time. The job being currently executed is called the {\it active}  job.
The {\it age} $a$ of a job is the total amount of time the processor has been executing the job.
The set of ages of a job is partitioned into two sets, $\XP$ and $\XNP$, where the job can be preempted and cannot be preempted, respectively.

\begin{definition}[Gittins Index, Gittins index policy]\ \\
   The Gittins index function at age $x$ of a job with  size
   measure $\phi$ and remaining probability function $\bar{F}$ is

    \begin{equation}
      \label{eq:Gittins}
      \nu(x) := \sup_{y > x}\frac{\pr (X \leq y | X>x)}{\E(\min(X,y) -x | X>x)} =  \sup_{y > x} \frac{\int_{x}^{y}  d\phi(u)}{\int_{x}^{y} \bar{F}(u) du}.
    \end{equation}
    The corresponding Gittins index policy works as follows:
    Whenever the  age $x$ of the current active job is preemptable ($x\in \XP$), the Gittins index policy activates one  job with the maximal current Gittins index.
  \end{definition}

  The Gittins index function is right-continuous, but may not be continuous.
Note that when $\phi$ contains an atom (say at age $x_0$,) then   $\lim_{x\to x_0, x< x_0}  \nu(x) = +\infty$, and   $\lim_{x\to x_0, x >  x_0}  \nu(x) = \nu(x_0)$. 
The Gittins index policy is known to be optimal for minimizing the expected response time of the jobs when the only information available on the jobs  is  their ages. Several versions of this result exist in the litterature. The most recent and general proof is given in \cite{Scully2021}. This result  is very strong: for any job in the queue, the Gittins index of a job is a function that only depends on its age (and not on the other jobs, nor on the preemption set) and yet,  it  completely characterizes which job to schedule first in an optimal fashion.  However, the indices can only be computed when the distribution $\phi$ is known.  In this work we show how to define an approximate value of the Gittins indices while only having samples from $\phi$.

\subsection{Contributiuon and positioning}

In this paper, we propose a way to estimate the Gittins index function based on $n$ independent job samples and show that the corresponding Gittins index policy becomes optimal when $n$ goes to infinity.
We will leverage the definition of Gittins index to construct an {\it ad hoc}  estimated distribution function and {\it ad hoc} estimated  indices that make the corresponding policy close to the Gittins index policy.

Our approach works in two stages: we first fix the set of ages where jobs can be preempted as $\XP^{\Delta,\epsilon}:= \{ \frac{i}{\Delta} < 1-\epsilon,  i \in \N, \}$, where  $\Delta \in \N$ is a free parameter and $\epsilon \in (0,1)$.
% Here $\xend$ is the special state reached by the job when its age reaches its size: $\xend  \in \XP^{\Delta,\epsilon}$ just means that when the job is completed, the job can (and must)  be preempted.
Therefore, in our first framework,  the scheduling decisions are reexamined every $1/\Delta$ units of time (here the processor has speed one) or when the active job finishes. % One should see $\Delta$ as a large integer with respect to the typical size of the jobs, so that the opportunities to preempt the active job  are plenty before the job is completed, although this is not necessary for our results to hold.
We show (Section \ref{ssec:perf}) that when $n$ grows, the estimated Gittins index policy becomes optimal under this $\Delta$-preemption case.

While this result has some interest by its own, the second step (Section \ref{ssec:no-preemption}) shows how to let  $\Delta$ grow as $n$ goes to infinity so that
the preemption set becomes dense in the set of ages. n this case,  the  estimated Gittins index policy becomes optimal without any restriction on the preemptive set.

\section{Discretization and right shift}

The main idea that underlies our estimated index is based on the construction of a shifted distribution. In this section, we consider any distribution with a support included in $\R_+$ the set of non-negative real numbers and we discretize it using an integer   parameter $\Delta \in \N$.

\subsection{Right-shifted distribution}

% We assume that we have $n$ samples taken independently from the density distribution
% $f$, and we want to compute an approximation of the Gittins indices for $f$, such
% that our resulting policy performs close to the optimal Gittins policy (that has
% perfect information about $f$). The goal is the following: as the number of
% samples increases, our computed indices should get  closer tand closer to the real indices, and then
% as the indices get closer, the resulting policy performs within a good
% approximation of the optimal gittins policy.

% We will first add a few strengthening assumptions about $f$, to make the
% analysis easier:

% \begin{itemize}
%     \item The support of $f$ is $[0, 1]$.
% %    \item $f(0)$ is finite.  NEEDED? 
%     \item $\forall t \in [0, 1], f(t) \geq \ell$.
%     \item $f$ is a $L$-Lipschitz continuous function.
% \end{itemize}

We first define a shifted {discretization} of the measure $\phi$: we denote by $\phi_\Delta$ the
shifted version of $\phi$,  where the new distribution is atomic with mass at points  that are multiples of $1/\Delta$.
We will use the handy notations $\lfloor t \rfloor_\Delta$ and $\lceil  t \rceil_\Delta$ to be the greatest (resp. smallest) multiple of $1/\Delta$ smaller (resp. greater) than $t$;
\begin{align*}
    \lfloor t \rfloor_\Delta := \frac{\lfloor t \Delta\rfloor}{\Delta},\text{ and }  \lceil t \rceil_\Delta := \frac{\lceil t \Delta\rceil}{\Delta}.
\end{align*}
We also use a strict version of the floor operator: 
   \[ \lfloor \!\lfloor t \rfloor \!\rfloor_\Delta := \frac{\lceil t \Delta\rceil -1}{\Delta}.\]

\begin{definition}[shifted discrete distribution]
    % Let $\Delta \in \mathbb{N}$. We define the discrete random variable
    % $X_\Delta$ by:

    % $$\forall i \in [1, \Delta], \mathbb{P}\left(X_\Delta = i \frac{1}{\Delta}\right) =
    % \mathbb{P}\left(\frac{i-1}{\Delta} < X \leq \frac{i}{\Delta} \right)$$
Let $\Delta \in \N$ and let $\phi$ be any positive measure on $\R$.
  The measure  $\phi_\Delta$ is defined by its value on any interval $[u,v]$:
  \[ \phi_\Delta(\big[u,v\big]) := \phi(\big[\lfloor \! \lfloor u \rfloor \! \rfloor_\Delta,  \lfloor v \rfloor_\Delta \big] ). \]
      \end{definition}

A more operational view of $\phi_{\Delta}$ is $\forall t$, 
\begin{align*}
         \phi_\Delta(t) 
        &= \sum_{i=1}^{\infty} \delta_{i/\Delta}(t) \phi\left(\left(\frac{i-1}{\Delta}, \frac{i}{\Delta}\right]\right) \\
          &= \sum_{i=1}^{\infty} \delta_{i/\Delta}(t) 
        \int_{\frac{(i-1)^+}{\Delta}}^{\frac{i}{\Delta}} d\phi(s) \\
  %      &= \sum_{i=1}^{\Delta} \delta_{i/\Delta}(t)
   %     x_\Delta^i 
    \end{align*}
where the integral $  \int_{\frac{(i-1)^+}{\Delta}}^{\frac{i}{\Delta}} d\phi(s)$ excludes any atom at $(i-1)/\Delta$ and includes any atom at $i/\Delta$. For a given $x\in\R$, $\delta_x$ denotes the Dirac distribution in $x$.

We can bound the cumulated distribution function $F_\Delta $ of this shifted distribution by the original cumulated distribution $F$.

\begin{lemma}
    $\forall {\Delta} \in \mathbb{N}, \forall t \in \R_+ F_\Delta(t) \leq F(t)$.
\end{lemma}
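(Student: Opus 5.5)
We compute $F_\Delta(t)=\phi_\Delta((-\infty,t])$ directly from the operational description of $\phi_\Delta$ as a sum of Dirac masses, and compare it with $F(t)=\phi((-\infty,t])$. The idea is that $\phi_\Delta$ moves all the mass of each interval $\left(\frac{i-1}{\Delta},\frac{i}{\Delta}\right]$ to its right endpoint $i/\Delta$. Mass is therefore only ever pushed to the right, so the CDF can only decrease.

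First, since $\phi_\Delta$ only charges the points $i/\Delta$ with $i\geq 1$, summing the atoms lying in $(-\infty,t]$ gives
\[
F_\Delta(t)=\sum_{i\geq 1,\; i/\Delta\leq t}\phi\left(\left(\tfrac{i-1}{\Delta},\tfrac{i}{\Delta}\right]\right),
\]
where for $i=1$ the interval is understood as $[0,\frac1\Delta]$, following the convention $(i-1)^+$ and the fact that $\phi$ is supported on $\R_+$. The admissible indices are exactly $i=1,\dots,\lfloor t\Delta\rfloor$. The corresponding intervals are pairwise disjoint and their union is $[0,\lfloor t\rfloor_\Delta]$. By additivity of $\phi$,
\[
F_\Delta(t)=\phi\big([0,\lfloor t\rfloor_\Delta]\big).
\]

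Second, we use $\lfloor t\rfloor_\Delta\leq t$. Monotonicity of the positive measure $\phi$, together with $\phi$ putting no mass on $(-\infty,0)$, then gives
\[
\phi\big([0,\lfloor t\rfloor_\Delta]\big)\leq \phi\big([0,t]\big)=F(t),
\]
which is the claim. Equivalently, this follows from the defining formula $\phi_\Delta([u,v])=\phi([\lfloor\!\lfloor u\rfloor\!\rfloor_\Delta,\lfloor v\rfloor_\Delta])$ applied with $u\leq 0$ and $v=t$.

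The only delicate point is the bookkeeping of the endpoints. The half-open intervals must tile $[0,\lfloor t\rfloor_\Delta]$ exactly, including the case where $t$ is itself a multiple of $1/\Delta$ and the case of an atom of $\phi$ at $0$. Both are handled by the convention that each integral excludes an atom at the left endpoint and includes one at the right endpoint, except the first, which starts at $0$ and includes it. With these conventions, no atom is counted twice or omitted, and the inequality follows.
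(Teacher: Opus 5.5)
Your argument is correct and is essentially the paper's proof: both sum the atoms of $\phi_\Delta$ at the points $i/\Delta\leq t$ to obtain $F_\Delta(t)=\phi([0,\lfloor t\rfloor_\Delta])$, then conclude from $\lfloor t\rfloor_\Delta\leq t$ and monotonicity of $\phi$. One small nit: for $t<1/\Delta$ your index set is empty, so $F_\Delta(t)=0$ rather than $\phi(\{0\})$ when $\phi$ has an atom at $0$, but in that case the inequality holds trivially.
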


\begin{proof}
  This is a quite straightforward property: Let ${\Delta} \in \N$ and $t \in \R_+$.
  \begin{align*}
        F_\Delta(t) 
        &= \int_{0}^{t} d\phi_\Delta(t)\\
        &= \int_{0}^{t} \left( \sum_{i=1}^{\Delta} \delta_{i/\Delta}(t)  \int_{\frac{i-1}{\Delta}}^{\frac{i}{\Delta}} d\phi(s) \right) dt \\
        &= \sum_{i=1}^{\Delta} \int_{0}^{t}  \left( \delta_{i/\Delta} (t)  \int_{\frac{i-1}{\Delta}}^{\frac{i}{\Delta}} d\phi(s) \right) dt\\
   %     &= \sum_{i=1}^{\frac{\lfloor t \rfloor_\Delta}{\frac{1}{\Delta}}} x_\Delta^i \\
        &= \int_{0}^{\lfloor t \rfloor_\Delta} \phi(s)  \\
        &\leq \int_{0}^{t} d\phi(s)  = F(t).
    \end{align*}
\end{proof}

\begin{figure}[hbtp]
    \centering
    \begin{tabular}{@{}c@{}c@{}}
        \resizebox{0.5\textwidth}{!}{\input{gittins_CDF.pgf}}
        &\resizebox{0.5\textwidth}{!}{\input{gittins_CDF_Pareto2.pgf}}\\
        (a) The distribution $\beta_{12, 50} + \beta_{10, 2}$ & (b) The truncated Pareto(2) distribution.
    \end{tabular}
    \caption{\label{fig:cdf} We plot the cumulative distribution $F$, the empirical distribution $\widehat{F_n}$ with $n=400$ samples and the shifted distribution $\widehat{F_{n,\Delta}}$, with $n=400$ and $\Delta = 20$. Each panel correspond to a specific initial distribution $F$.}
\end{figure}

We illustrate the shifted distribution in Figure~\ref{fig:cdf} where we display the three cumulative function ($F$, $\widehat{F_n}$ and $\widehat{F_{n,\Delta}}$) for types of job sizes. The distribution are obtained with $200$ \emph{i.i.d.} samples from $F$, and $\widehat{F_{n,\Delta}}$ is discretized by using $\Delta = \sqrt{n}$. In Panel~(a), the jobs are generated according to a distribution $F$ that is a mixture of two beta distribution with parameters $(12,50)$ and $(10,2)$. We call this distribution $\beta({12, 50} + \beta{10, 2}$.  In Panel~(b), the jobs are generated according to a distribution $F$ that is a Pareto($2$) distribution truncated to that all job sizes are between $0$ and $5$. 

\subsection{The shifted empirical index}

In this section we  define the shifted empirical index  from $n$ independent samples of the job size, $X_1,\ldots, X_n$.
Here, $\Delta$ is an arbitrary integer used for discretization. Scaling $n$ with respect to $\Delta$  will be done in the next section.
We define several index functions, using several distributions.
\begin{itemize}
\item 
$\nu$ is the Gittins index function of $f$, the distribution of the job size as defined in \eqref{eq:Gittins}
\item $\nu_\Delta$ is the Gittins index function of $f_\Delta$, the shifted discrete distribution.

\item Now, let  $\hat{\phi_n}$ be the empirical density distribution defined by the $n$ samples: $\hat{\phi_n} = \sum_{i=1}^n \frac{1}{n}\delta_{X_i}$. The function $\widehat{\nu_n}$ is the Gittins index function of $\hat{\phi_n}$, the empirical  distribution. 
  
\item Finally, let $\widehat{\phi_{n,\Delta}}$ be the shifted discrete version of $\hat{\phi_n}$.
  The {\it shifted empirical index} $\widehat{\nu_{n,\Delta}}$ is the Gittins index function of the distribution $\widehat{\phi_{n,\Delta}}$.

\end{itemize}

\begin{figure}[hbtp]
    \centering
    \begin{tabular}{cc}
        \resizebox{0.5\textwidth}{!}{\input{gittins_indices.pgf}}
        &\resizebox{0.5\textwidth}{!}{\input{gittins_indices_Pareto2.pgf}}\\
        (a) Distribution of jobs is $\beta_{12, 50} + \beta_{10, 2}$ & (b) Distribution of jobs is Pareto(2)
    \end{tabular}
    \caption{This figure displays the  rank $r$  (inverse of the index: $r := 1/\nu$), for the different index functions $\nu$, $\widehat{\nu_n}$ and $\widehat{\nu_{n,\Delta}}$ respective indices for the distributions  $F$,  $\widehat{F_n}$ and $\widehat{F_{n,\Delta}}$ given in the previous figure.}
    \label{fig:index}
\end{figure}

Note that the same shifted empirical index would be obtained if each sample $X_i$  is  shifted to the smallest multiple of $1/\Delta$ greater than $X_i$, instead of shifting the empirical distribution. We illustrate the three definition of indices in Figure~\ref{fig:index} where we diplay the inverse (sometimes called rank) of the Gittins index functions $\nu,\widehat{\nu_n}$ and $\widehat{\nu_{n,\Delta}}$ of the distributions displayed in Figure \ref{fig:cdf}. % are displayed in Figure \ref{fig:index}.

One could wonder how much of a negative impact the replacement of the real Gittins function by its shifted sample version can have on of the resulting scheduling policy. We will show that the impact is minimal, and decreases as a function of ${\Delta}$ in the next section. Actually shifting  the samples to the right is designed  to reduce the  impact on the policy. Here some high level intuition on why shifting samples to the right is a good idea. The right shift can be thought as overestimating the duration of jobs. Now if a job is selected to be executed at time $t$ because its index is large, then any premature completion due to its real size can only be a good thing for the response time. On the other hand, shifting the samples to the left would have a much bigger negative impact: think of a distribution with two atoms, one at $A$ and one at $1$, where $A$ is not a multiple of $1/\Delta$. Shifting  the samples from $A$ to the left would result in indices that become large just before time $A$ and become small again at time $A$. The corresponding index policy will stop the execution of the current  job before time $A$, missing an opportunity to complete the job (if the job is of size $A$). Similar observations were done in  \cite{moseley2025robust} to construct a robust scheduling for Gittins indices.

\section{Convergence of the shifted empirical index}

The goal of this section is to establish relations between the various Gittins index functions. We  show that the shifted empirical index is quite close to the real Gittins index when the number of samples grows. We make the following technical assumptions on the density $f$: 

\begin{assumption}\label{ass:1}
  The distribution $\phi$ has a density $f$ (no atoms) and has the following properties.
 \begin{itemize}
    \item The support of $f$ is $[0, 1]$.
%    \item $f(0)$ is finite.  NEEDED? 
    \item $\forall t \in [0, 1], f(t) \geq \ell$.
    \item $f$ is a $L$-Lipschitz continuous function.
    \end{itemize}
  \end{assumption}

 This assumption may look quite strong, although any distribution with finite support can be approached by a distribution satisfying this assumption. The finite support assumption is useful to get  simple closed form expressions for the bounds. We  believe it could be removed to the price of replacing uniform bounds by distribution dependent bounds. 
  
% We let $\nu_\Delta(\frac{i}{\Delta}) =
% \sup_{t'>\frac{i}{\Delta}} \frac{\int_{\frac{i}{\Delta}}^{t'} f_\Delta(s)ds}{\int_{\frac{i}{\Delta}}^{t'}
% \bar{F}_\Delta(s) ds}$ be the pushed gittins indices, we have the following
% lemma:

\begin{lemma}\label{lem:shifted}
    Let $\epsilon >0$ and $\Delta \in \N$. For all  $ i \in \mathbb{N}$ such that $\frac{i}{\Delta} < 1
    - \epsilon$, then:

    $$\nu(\frac{i}{\Delta}) \frac{1 -\frac{L}{\Delta \ell \epsilon}}{1 +
   \frac{L}{\Delta \ell \epsilon}}\leq \nu_\Delta(\frac{i}{\Delta}) \leq \nu(\frac{i}{\Delta}).$$
\end{lemma}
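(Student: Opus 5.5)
The plan is to fix $x=\frac{i}{\Delta}<1-\epsilon$ and compare, for each horizon $y>x$, the Gittins ratio of $\phi$,
\[
R(y):=\frac{\int_x^y d\phi}{\int_x^y \bar F(u)\,du},
\]
with the ratio $R_\Delta(y)$ built from $\phi_\Delta$ and $\bar F_\Delta$. Since $\nu(x)=\sup_y R(y)$ and $\nu_\Delta(x)=\sup_y R_\Delta(y)$, both inequalities reduce to comparing numerators and denominators of these two ratios.

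\textbf{Upper bound.} First I would show that the supremum defining $\nu_\Delta(x)$ may be restricted to grid points $y=\frac{j}{\Delta}$, $j>i$. On an open cell $(\frac{j-1}{\Delta},\frac{j}{\Delta})$ the numerator of $R_\Delta$ is constant, because $\phi_\Delta$ only has atoms at multiples of $1/\Delta$, while its denominator is nondecreasing in $y$. Hence within each cell $R_\Delta$ is largest at the right endpoint, where the atom has already been counted.

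For a grid point $y=\frac{j}{\Delta}$ with $x$ also on the grid, the definition of $\phi_\Delta$ gives
\[
\phi_\Delta\big((x,y]\big)=\phi\big((x,y]\big),
\]
so the numerators agree. By the earlier lemma, $F_\Delta\le F$, i.e.\ $\bar F_\Delta\ge \bar F$, so the denominator of $R_\Delta$ is at least that of $R$. Therefore $R_\Delta(y)\le R(y)\le \nu(x)$, and taking the supremum over grid points yields $\nu_\Delta(x)\le\nu(x)$.

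\textbf{Lower bound.} Fix an arbitrary $y>x$. Since $\phi$ has no mass beyond $1$, the ratio $R$ only decreases past $1$, so I may assume $y\le 1$. I would then choose a nearby grid point, trying $y'=\lceil y\rceil_\Delta$ first and $\lfloor y\rfloor_\Delta$ if that gives cleaner constants, and aim to prove
\[
R_\Delta(y')\ \ge\ \frac{1-a}{1+a}\,R(y),\qquad a=\frac{L}{\Delta\ell\epsilon}.
\]
Taking the supremum over $y$ then gives the claim. The two halves are as follows.
\begin{itemize}
\item \emph{Denominator.} On each cell, $\bar F_\Delta(u)-\bar F(u)=\phi\big((\lfloor u\rfloor_\Delta,u]\big)$. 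I want to show this excess is at most $a$ times $\bar F$, or at most $a$ times the original denominator, up to the boundary cell.
\item \emph{Numerator.} The mass gained or lost in the boundary cell $[y,y']$ (or $[\lfloor y\rfloor_\Delta,y]$) must be at most a fraction $a$ of $\int_x^y f$. Here I would use $f\ge \ell$ together with the fact that, by $L$-Lipschitzness, $f$ varies by at most $L/\Delta$ over a cell.
\end{itemize}

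\textbf{Main obstacle.} The hard step is turning these per-cell errors into a relative error uniform in $y$, with exactly the constant $L/(\Delta\ell\epsilon)$. The cell excess is of order $f/\Delta$, while $\bar F(u)\ge \ell(1-u)$ is only guaranteed to be at least $\ell\epsilon$ for $u\le 1-\epsilon$; horizons $y$ reaching into $(1-\epsilon,1]$ are therefore delicate. My first attempt is to compare cell by cell: write the excess as $\frac1\Delta f(u)$ plus a Lipschitz correction of size $L/\Delta^2$. The leading term $\frac1\Delta f(u)$ is then matched between numerator and denominator, so it cancels in the ratio. Only the Lipschitz correction remains, and I would divide it by $\ell\epsilon$, using $\bar F\ge\ell\epsilon$ near the starting age $x<1-\epsilon$, where the denominator accumulates. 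If this cancellation does not close cleanly, the fallback is to use that for a Lipschitz density bounded below the optimal stopping point $y^*$ keeps a region of non-negligible $\bar F$. In that case it suffices to treat $y$ near $y^*$.
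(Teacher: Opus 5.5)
\textbf{Upper bound.} Your upper bound is correct and is essentially the paper's argument: $\phi_\Delta((x,y])=\phi((x,\lfloor y\rfloor_\Delta])\le\phi((x,y])$, and $\bar F_\Delta\ge\bar F$. There is one slip. On an open cell the numerator of $R_\Delta$ is frozen while the denominator grows, so $R_\Delta$ is \emph{nonincreasing} there. It is therefore dominated by its value at the \emph{left} grid endpoint (and it vanishes on the first cell), not the right one. The reduction to grid points survives, and you do not actually need it.

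\textbf{Lower bound: the step that fails.} The lower bound is not proved. Its decisive step is left open, and the mechanism you propose for it fails. At a grid horizon $y'$ the numerators of $R$ and $R_\Delta$ coincide exactly. So nothing in the numerator can absorb the leading part of the denominator excess, which is
\[
\int_x^{y'}\big(\bar F_\Delta-\bar F\big)(u)\,du=\int_x^{y'}\phi\big((\lfloor u\rfloor_\Delta,u]\big)\,du\approx\frac{1}{2\Delta}\,\phi\big((x,y']\big).
\]
Hence $1/R_\Delta(y')\approx 1/R(y')+1/(2\Delta)$. This is a relative loss of order $\nu(x)/\Delta$, and it has nothing to do with $L$.

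The uniform density makes this concrete. Take $f\equiv 1$ on $[0,1]$; it satisfies Assumption~\ref{ass:1} with $\ell=1$ and every $L\ge 0$ (reading the assumption as Lipschitz on $[0,1]$, the only reading compatible with $f\ge\ell>0$ there). A direct computation gives $\nu(i/\Delta)=\frac{2\Delta}{\Delta-i}$ and $\nu_\Delta(i/\Delta)=\frac{2\Delta}{\Delta-i+1}$. So $\nu_\Delta/\nu=\frac{\Delta-i}{\Delta-i+1}$ stays away from $1$, while your target factor $\frac{1-a}{1+a}$ (with $a=\frac{L}{\Delta\ell\epsilon}$) tends to $1$ as $L\to0$. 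No argument can deliver the constant $L/(\Delta\ell\epsilon)$ from Assumption~\ref{ass:1} alone, and your fallback about $y^*$ does not touch this term.

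\textbf{How to finish.} The paper takes a cruder route. It evaluates $R_\Delta$ at $\lceil t^*\rceil_\Delta$ for an optimal $t^*$, where the numerator can only grow. It then bounds the denominator pointwise by $\bar F_\Delta\le\bar F+L/\Delta$ and compares with $\bar F\gtrsim\ell\epsilon$. That pointwise bound really reads $\bar F_\Delta-\bar F\le(\max f)/\Delta$. So it silently uses $f\le L$, which is asserted only in the proof of Lemma~\ref{lem:sample} and does not follow from the assumption.

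To finish, either add that hypothesis or put $\max f$ in place of $L$ in this term (up to numerical constants). Then proceed in three steps.
\begin{itemize}
\item Control the excess in integrated form via $\int_x^{y'}\bar F\ge\frac{\ell\epsilon}{2}(y'-x)$ for $y'\le 1$. This is what tames the horizons in $(1-\epsilon,1]$ that worried you.
\item Control the boundary cell by taking $y$ to be a maximiser $t^*$ of $R$ on $[x,1]$. If $t^*\in(x,1)$, first-order optimality gives $f(t^*)/\bar F(t^*)=\nu(x)$. By the mediant inequality, appending $[t^*,\lceil t^*\rceil_\Delta]$ then costs at most a factor $1-\frac{L}{2\Delta\ell}$.
\item The cases $t^*=1$ and $t^*\downarrow x$ are direct.
\end{itemize}
Altogether this gives
\[
\nu_\Delta(x)\ge\nu(x)\,\frac{1-\frac{L}{2\Delta\ell}}{1+\frac{2\max f}{\Delta\ell\epsilon}}.
\]
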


\begin{proof}
%    Let $\frac{1}{\Delta} \in \mathbb{R}$ and $i \in \mathbb{N}$.
    
    Right  inequality:

    \begin{align*}
        \nu_\Delta(\frac{i}{\Delta}) 
        &= \sup_{t'>\frac{i}{\Delta}} \frac{\int_{\frac{i}{\Delta}}^{t'}
        f_\Delta(s)ds}{\int_{\frac{i}{\Delta}}^{t'} \bar{F}_\Delta(s) ds} \\
        &\leq \sup_{t'> \frac{i}{\Delta}} \frac{\int_{\frac{i}{\Delta}}^{t'}
        \sum_{j=1}^\Delta \delta_{j/\Delta}(s)  \left( \int_{\frac{j-1}{\Delta}}^{\frac{j}{\Delta}} f(u) du \right) ds}{\int_{\frac{i}{\Delta}}^{t'} \bar{F}(s) ds} \\
        &\leq \sup_{t'>\frac{i}{\Delta}} \frac{\int_{\frac{i}{\Delta}}^{\lfloor t' \rfloor_\Delta} f(s)
        ds}{\int_{t}^{t'} \bar{F}(s) ds} \\
        &\leq \sup_{t'>\frac{i}{\Delta}} \frac{\int_{\frac{i}{\Delta}}^{t'} f(s) ds}{\int_{t}^{t'}
        \bar{F}(s) ds} \\
        &\leq \nu(\frac{i}{\Delta}). \\
    \end{align*}

    %TODO Lemma F_N <= F <= F_N + L/N

   Left  inequality:

    \begin{align*}
        \nu_\Delta(\frac{i}{\Delta}) 
        &= \sup_{t'> \frac{i}{\Delta}} \frac{\int_{\frac{i}{\Delta}}^{t'}
            \sum_{j=1}^{\Delta}
         \delta_{j/\Delta}(s)  \left( \int_{\frac{j-1}{\Delta}}^{\frac{j}{\Delta}} f(u) du \right) ds}{\int_{\frac{i}{\Delta}}^{t'} \bar{F}_\Delta(s) ds}
        \\
        &\geq \frac{\int_{\frac{i}{\Delta}}^{\lceil t^* \rceil_\Delta}
        \sum_{j=1}^{\Delta}
        \delta_{j/\Delta}(s)  \left( \int_{\frac{j-1}{\Delta}}^{\frac{j}{\Delta}} f(u) du \right) ds}{\int_{\frac{i}{\Delta}}^{\lceil t^*
            \rceil_\Delta}
        \bar{F}_\Delta(s) ds} \\
        &\geq \frac {
            \int_{\frac{i}{\Delta}}^{\lceil t^* \rceil_\Delta} f(s) ds
        }{
            \int_{\frac{i}{\Delta}}^{\lceil t^* \rceil_\Delta} \bar{F}_\Delta(s) ds
          },
    \end{align*}
    where $t^*$ achieves the supremum  in the first equality.
    Using $t^* \geq \frac{i}{\Delta} + \epsilon$, we get
    \begin{align*}
        &\geq \frac{
            \int_{\frac{i}{\Delta}}^{t^*} f(s) ds\left(1 - \frac{\int_{\lceil
            t^* \rceil_\Delta}^{t^*} f(s) ds}{\int_{\frac{i}{\Delta}}^{t^*}
            f(s)ds}\right)
        }{
            \int_{\frac{i}{\Delta}}^{\lceil t^* \rceil_\Delta} (\bar{F}(s) + L\frac{1}{\Delta}) ds
        } \\
        &\geq \frac{
            \int_{\frac{i}{\Delta}}^{t^*} f(s) ds\left(1 - \frac{\int_{\lceil
            t^* \rceil_\Delta}^{t^*} f(s) ds}{\int_{\frac{i}{\Delta}}^{t^*}
            f(s)ds}\right)
        }{
            \int_{\frac{i}{\Delta}}^{\lceil t^* \rceil_\Delta} \bar{F}(s) ds \left(1 +
            \frac{(\lceil t^* \rceil_\Delta - \frac{i}{\Delta})L \frac{1}{\Delta}}{
            \int_{\frac{i}{\Delta}}^{\lceil t^* \rceil_\Delta} \bar{F}(s) ds}\right)
        } \\
        &\geq \nu(\frac{i}{\Delta}) \frac{
            1 - \frac{\int_{\lceil t^* \rceil_\Delta}^{t^*} L
            ds}{\int_{\frac{i}{\Delta}}^{t^*} \ell ds}
        }{
            1 + \frac{(\lceil t^* \rceil_\Delta - \frac{i}{\Delta})L\frac{1}{\Delta}}{
            \int_{\frac{i}{\Delta}}^{\lceil t^* \rceil_\Delta} \epsilon \ell ds}
        } \\
        &\geq \nu(\frac{i}{\Delta}) \frac{
            1 -\frac{L}{\Delta \ell \epsilon}
        }{
            1 + \frac{L}{\Delta \ell \epsilon}
        }.
        %t^* < \frac{i}{\Delta} + \epsilon \\
        %&\geq \frac{
        %    \int_{\frac{i}{\Delta}}^{t^*} f(s) \left(1 - \frac{\int_{\lceil
        %    t^* \rceil_N}^{t^*} f(s) ds}{\int_{\frac{i}{\Delta}}^{t^*}
        %    f(s)ds}\right)
        %}{
        %    \int_{\frac{i}{\Delta}}^{\lceil t^* \rceil_N} (\bar{F}(s) + \frac{L}{N}) ds
        %} \\
    \end{align*}
\end{proof}

After proving that the shifted distribution gives Gittins indices close to the indices of the
real distribution, we now show that the sampled version of the  indices are close to
the shifted ones. Since both density distributions $ \widehat{f_{n, \Delta}}$ and $f_\Delta$ are purely atomic, both with all atoms at positions $i/\Delta$, we use the 1-Wasserstein distance that becomes the maximal difference between the size of their atoms in our case:
The size of the atom at $i/\Delta$ for  $ \widehat{f_{n, \Delta}}$ is $\sum_{k=1}^n \frac{1}{n}{\bf 1}_{\{(i-1)/\Delta < X_k \leq i/\Delta\}}$. The size of the atom at $i/\Delta$ for  $f_{\Delta}$ is $\int_{(i-1)^+/\Delta}^{i/\Delta} f_\Delta(s) ds$.
Here, the 1-Wasserstein distance is
\[ d(\widehat{f_{n, \Delta}}, f_\Delta)  := \max_{i=1,...,\Delta} \left\vert \sum_{k=1}^n \frac{1}{n} {\bf 1}_{\{(i-1)/\Delta < X_k \leq i/\Delta\}} - \int_{(i-1)^+/\Delta}^{i/\Delta} f_\Delta(s) ds \right\vert.\]
We have the following lemma:

\begin{lemma}\label{lem:sample}
 %   \red{la norme n'est pas définie, et il y a confusion entre $f$ densité ou pas densité.}
    Let $\Delta \in \N$. Denoting $A := d(\widehat{f_{n, \Delta}}, f_\Delta)$. Then for all $i \in [0, \Delta]$:
    \begin{equation}\label{eq:sample}
        \left(\frac{1 - \frac{A\Delta}{L}}{1+\frac{A \Delta}{\ell}}\right) \nu_\Delta(\frac{i}{\Delta})
    \leq \widehat{\nu_{n, \Delta}}(\frac{i}{\Delta}) \leq \left(\frac{1 + \frac{A\Delta}{\ell}}{1 -
        \frac{A\Delta}{L}}\right) \nu_\Delta(\frac{i}{\Delta}).
  \end{equation}
\end{lemma}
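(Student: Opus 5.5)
The plan is to compare the two ratios term by term. Both $\widehat{\phi_{n,\Delta}}$ and $\phi_\Delta$ are purely atomic with atoms only at the points $j/\Delta$. So for an age $x=i/\Delta$ the supremum in \eqref{eq:Gittins} may be restricted to $y=k/\Delta$ with $k>i$. Between two consecutive grid points the numerator is constant and the denominator strictly increases, so the ratio only decreases there.

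Write $p_j$ for the atom of $\phi_\Delta$ at $j/\Delta$ and $\hat p_j$ for the atom of $\widehat{\phi_{n,\Delta}}$. For each fixed $k$ the ratio is
\[ R(k)=\frac{\sum_{j=i+1}^{k} p_j}{\frac{1}{\Delta}\sum_{m=i}^{k-1}\sum_{j>m} p_j}, \]
and $\hat R(k)$ is the same expression with $\hat p_j$. By definition of $A$ we have $|\hat p_j-p_j|\le A$ for every $j$. If we can show
\[ c_-\,R(k)\le \hat R(k)\le c_+\,R(k)\quad\text{for all } k>i, \]
with constants independent of $k$, then taking the supremum over $k$ on both sides gives \eqref{eq:sample} directly.

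The key step is a relative per-atom bound. By Assumption~\ref{ass:1}, $p_j=\int_{(j-1)/\Delta}^{j/\Delta} f\ge \ell/\Delta$, so $\hat p_j\ge p_j(1-A\Delta/\ell)$ and $\hat p_j\le p_j(1+A\Delta/\ell)$. Every numerator and every tail term $\sum_{j>m}p_j$ is a sum of atoms $p_j$, so the same relative bounds pass to these sums, term by term. This gives
\[ \frac{1-A\Delta/\ell}{1+A\Delta/\ell}\,R(k)\le\hat R(k)\le\frac{1+A\Delta/\ell}{1-A\Delta/\ell}\,R(k), \]
which is valid whenever $A\Delta<\ell$. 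The dependence on $i$ is harmless: any $i\le\Delta$ with a nonempty tail works, and if the tail is empty both indices are degenerate.

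The final step is to pass from this $\ell$-form to the bound exactly as stated, with $1-A\Delta/L$ in the denominator factor. When $L\le\ell$ we have $A\Delta/L\ge A\Delta/\ell$, hence $1-A\Delta/L\le 1-A\Delta/\ell$. The stated lower constant is then smaller than ours and the stated upper constant is larger, so \eqref{eq:sample} follows immediately in that case.

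I expect the main obstacle to be the regime $L>\ell$, where the stated bound is strictly stronger than what the plain per-atom argument gives. There I would first try to sharpen the lower per-atom estimate using Lipschitz continuity instead of the lower bound $\ell$. On a cell of width $1/\Delta$, $f$ varies by at most $L/\Delta$, so $p_j$ is close to $f(j/\Delta)/\Delta$. I would then aim to control the deviation $\hat p_j-p_j$ relative to the local oscillation, so that the error is measured in units of $L/\Delta$. If this does not close, I would record that the $\ell$-form above is what the argument actually establishes. That form suffices for the later use of the lemma: we only need both constants to tend to $1$ as $A\Delta\to0$, and this holds for either form.
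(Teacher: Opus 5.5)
Your core argument follows the paper's own skeleton. You restrict to grid points $y=k/\Delta$, bound each $\hat p_j$ by $p_j\pm A$, turn the additive error into a relative one through a per-atom bound, carry it through the numerator and the tail sums, and take the supremum over $k$. Using $p_j\ge \ell/\Delta$ on both sides, as you do, is correct and proves
\[
\frac{1-A\Delta/\ell}{1+A\Delta/\ell}\,\nu_\Delta(i/\Delta)\le \widehat{\nu_{n,\Delta}}(i/\Delta)\le \frac{1+A\Delta/\ell}{1-A\Delta/\ell}\,\nu_\Delta(i/\Delta)\qquad\text{for } A\Delta<\ell .
\]
The paper gets $1-A\Delta/L$ instead by writing $p_j-A\ge p_j(1-A\Delta/L)$ for the tail atoms. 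That step requires $p_j\ge L/\Delta$, but the preliminary bound the paper establishes is $p_j\le L/\Delta$. So the step you single out as the main obstacle is exactly where the paper's proof breaks. No Lipschitz refinement can close it, because the stated inequality is false when $L$ is large compared with $\ell$.

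\textbf{Counterexample.} Take $\Delta=40$, $\ell=\frac12$, $i=20$, and a density $f\ge\ell$ with cell masses $p_{21}=\frac{30}{800}$, $p_j=\frac{10}{800}$ for $22\le j\le 40$, and $p_j=\frac{29}{800}$ for $j\le 20$.
\begin{enumerate}
\item Cell $22$ has mass exactly $\ell/\Delta$, so $f\equiv\ell$ there and $f(21/40)=\ell$. Fitting mass $3\ell/\Delta$ into cell $21$ then forces $L\ge 4\ell\Delta=80$. Such an $f$ exists, even with $\ell\le f\le L$.
\item After dividing by $\Delta$, the ratio at $y=21/40$ is $\frac{3}{22}$. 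All later grid points give less; the largest is $\frac{22}{212}$, at $y=1$. Hence $\nu_\Delta(1/2)=40\cdot\frac{3}{22}$.
\item Take $n=800$ samples: $35$ in cell $21$, $5$ in each of cells $22,\dots,40$, $34$ in eighteen of the first twenty cells, and $29$ in the other two. Then $A=\frac1{160}$ and $A\Delta/\ell=\frac12$.
\item We get $\widehat{\nu_{n,\Delta}}(1/2)\ge 40\cdot\frac{35}{130}$, so $\widehat{\nu_{n,\Delta}}(1/2)/\nu_\Delta(1/2)\ge\frac{77}{39}\approx 1.97$. The claimed factor $\frac{1+A\Delta/\ell}{1-A\Delta/L}$ is at most $\frac{3/2}{1-1/320}<1.51$.
\end{enumerate}
The same pattern with $\Delta=2M$, $i=M$ and $A=x\ell/\Delta$ gives a ratio tending to $\frac{1+x/3}{1-x}>1+x$ as $M\to\infty$. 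So the failure persists for small $x$, which is the regime where the lemma is actually used.

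Two further remarks. First, your reduction for $L\le\ell$ needs $A\Delta<L$. If $L\le A\Delta$, the stated upper constant is undefined or negative, and the inequality fails outright; note that a uniform $f$ is $L$-Lipschitz for every $L>0$. Second, your fallback is the right resolution. The $\ell$-form is the true statement, and it is all that Lemma~\ref{lem:probaSample} and Theorem~\ref{th:main} need, after replacing $L$ by $\ell$ in the constants.
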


\begin{proof}
    We first check that $\forall i \in [1, \Delta]$, we have:

    $$\frac{\ell}{\Delta} \leq \int_{\frac{i}{\Delta}}^{\frac{i+1}{\Delta}} f_\Delta(s) ds \leq \frac{L}{\Delta}.$$

    \textbf{Left part:} we simply use the fact that $f \geq \ell$.

    \textbf{Right part:} $f$ is $L$-lipschitz continuous and has support $[0,
    1]$. Thus $f \leq L$.
Let us now prove the right inequality in \eqref{eq:sample}
    \begin{align*}
        \widehat{\nu_{n, \Delta}}(\frac{i}{\Delta}) 
        &= \sup_{t' > \frac{i}{\Delta}} \frac{\int_{\frac{i}{\Delta}}^{t'} \widehat{f_{n, \Delta}}(s)
            ds}{\int_{\frac{i}{\Delta}}^{t'}
        \widehat{\bar{F}_{n, \Delta}}(s) ds} \\
        &= \max_{j > i} \frac{\int_{i /\Delta}^{j/\Delta} \widehat{f_{n, \Delta}}(s)
            ds}{\int_{i/\Delta}^{j/\Delta}
        \widehat{\bar{F}_{n, \Delta}}(s) ds} \\
        &= \max_{j > i} \frac{\sum_{k=i}^{j-1} \int_{k/\Delta}^{(k+1)/\Delta}
            \widehat{f_{n, \Delta}}(s)
            ds}{\sum_{k=i}^{j-1}  \sum_{l=k}^{\Delta-1}
                \int_{l/\Delta}^{(l+1)/\Delta}
            \widehat{f_{n, \Delta}}(s) ds} \\
        &\leq \max_{j > i} \frac{\sum_{k=i}^{j-1} \int_{k/\Delta}^{(k+1)/\Delta}
            (f_\Delta(s) +
            A) ds}{\sum_{k=i}^{j-1} \sum_{l=k}^{\Delta-1}
                \int_{l/\Delta}^{(l+1)/\Delta}
        (f_\Delta(s) - A) ds} \\
        &\leq \max_{j > i} \frac{\left( \sum_{k=i}^{j-1} \int_{k/\Delta}^{(k+1)/\Delta}
            f_\Delta(s) ds \right) (1+ \frac{A\Delta}{\ell})}{\left( \sum_{k=i}^{j-1} \Delta
            \sum_{l=k}^{\Delta}
        \int_{k/\Delta}^{(k+1)/\Delta} f_\Delta(s) ds \right)(1 - \frac{A\Delta}{L})} \\
        &\leq \nu_\Delta(\frac{i}{\Delta}) \frac{1 + \frac{A \Delta}{\ell}}{1 -
          \frac{A\Delta}{L}}.
    \end{align*}
  {Similarly : }
        \begin{align*}
        \widehat{\nu_{n, \Delta}}(\frac{i}{\Delta}) 
        &\geq \max_{j > i} \frac{\sum_{k=i}^{j-1} \int_{k/\Delta}^{(k+1)/\Delta}
            (f_\Delta(s) -
            A) ds}{\sum_{k=i}^{j-1} \Delta \sum_{l=k}^{\Delta-1}
                \int_{k/\Delta}^{(k+1)/\Delta}
        (f_\Delta(s) + A) ds} \\
        &\geq \max_{j > i} \frac{\left(\sum_{k=i}^j \int_{k\Delta}^{(k+1)\Delta}
            f_\Delta(s) ds \right)(1
            - \frac{A \Delta}{L})}{\sum_{k=i}^{j-1} 
            \sum_{l=k}^{\Delta-1}
        \int_{k/\Delta}^{(k+1)/\Delta} f_\Delta(s) ds (1 + \frac{A\Delta}{\ell})  } \\
        &\geq \nu_\Delta(\frac{i}{\Delta}) \frac{1 - \frac{A \Delta}{L}}{ + \frac{A \Delta }{\ell}}.
    \end{align*}
\end{proof}

%\begin{corollary}
%    Suppose that $\lim_{n \rightarrow \infty} \Delta \lVert \hat{f}_{N,n} - f_N
%    \rVert_{\infty} = 0$, then $\lim_{n \rightarrow \infty} \hat{\nu}_{N,n} = \nu_{N}$.
%\end{corollary}

We are now ready to compare the index of the shifted sampled distribution with the real ones.
To do so, we need  $n$ to be large enough compared with  $\Delta$.

\begin{lemma}\label{lem:probaSample}
  Let $n \in \N$ be the number of samples. Assume $n \geq \Delta^{2 +4\alpha}$, where $1/2 > \alpha >0$. Under the foregoing assumptions, 
  with probability greater than $1-2 \Delta \exp(-2n^\alpha)$ we get for all $0<i\leq \Delta$,
   $$\frac{1 -
    \frac{n^{-\alpha (1-2\alpha)}}{L}}{1+\frac{n^{-\alpha (1-2\alpha)}}{\ell}}
    \nu_\Delta(i/\Delta)
    \leq \widehat{\nu_{n, \Delta}}(i/\Delta) \leq \frac{1 + \frac{n^{-\alpha (1-2\alpha)}}{\ell}}{1 -
    \frac{n^{-\alpha (1-2\alpha)}}{L}} \nu_\Delta(i/\Delta).$$
\end{lemma}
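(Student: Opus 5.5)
The plan is to reduce the statement to Lemma~\ref{lem:sample}: it suffices to show that, with probability at least $1-2\Delta\exp(-2n^\alpha)$, the distance $A = d(\widehat{f_{n,\Delta}}, f_\Delta)$ satisfies $A\Delta \leq n^{-\alpha(1-2\alpha)}$. The ratio bounds in \eqref{eq:sample} are monotone in $A\Delta$: the factor $(1+A\Delta/\ell)/(1-A\Delta/L)$ increases with $A\Delta$ and its reciprocal decreases. Replacing $A\Delta$ by the upper bound $n^{-\alpha(1-2\alpha)}$ therefore gives exactly the claimed inequalities for all $0<i\leq\Delta$ simultaneously. (We implicitly need $n^{-\alpha(1-2\alpha)} < L$ for the denominator to be positive; this holds for $n$ large, and the statement is vacuous otherwise.)

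To control $A$, fix a bin $i \in \{1,\dots,\Delta\}$ and set $p_i := \phi\big(((i-1)/\Delta, i/\Delta]\big)$, which is the atom of $f_\Delta$ at $i/\Delta$. The atom of $\widehat{f_{n,\Delta}}$ at $i/\Delta$ is $\frac1n\sum_{k=1}^n \mathbf{1}_{\{(i-1)/\Delta<X_k\le i/\Delta\}}$. This is an average of $n$ i.i.d.\ Bernoulli($p_i$) variables. Hoeffding's inequality gives, for any $t>0$,
\[ \pr\Big(\Big|\tfrac1n\textstyle\sum_k \mathbf{1}_{\{\cdot\}} - p_i\Big| > t\Big) \leq 2\exp(-2nt^2). \]
A union bound over the $\Delta$ bins then yields $\pr(A>t)\leq 2\Delta\exp(-2nt^2)$.

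It remains to choose $t$ so that $2nt^2 = 2n^\alpha$, i.e.\ $t = n^{-(1-\alpha)/2}$, and to check that $\Delta t \leq n^{-\alpha(1-2\alpha)}$. The hypothesis $n\geq \Delta^{2+4\alpha}$ gives $\Delta \leq n^{1/(2+4\alpha)}$. Hence
\[ \Delta t \leq n^{\frac{1}{2+4\alpha} - \frac{1-\alpha}{2}}, \]
and one checks that this exponent is at most $-\alpha(1-2\alpha)$. This exponent comparison is the one step that may fail. If it does for some $\alpha\in(0,1/2)$, I would instead pick $t = n^{-\alpha(1-2\alpha)}/\Delta$, the largest $t$ allowed by the target. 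Then $2nt^2 = 2n^{1-2\alpha(1-2\alpha)}/\Delta^2 \geq 2n^{1-2\alpha(1-2\alpha)-1/(1+2\alpha)}$, and I would verify that this exponent is at least $\alpha$, which gives the probability bound $2\Delta\exp(-2n^\alpha)$. I expect this exponent bookkeeping to be the main obstacle: it is elementary, but the precise powers in the statement may require a slightly different choice of $t$ or may hold only up to the stated constraint $\alpha<1/2$. The probabilistic part, Hoeffding plus a union bound, and the final monotone substitution into Lemma~\ref{lem:sample} are routine.
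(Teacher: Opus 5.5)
Your approach is the same as the paper's. You apply Hoeffding to each of the $\Delta$ bin frequencies, take a union bound to get $\pr(A>t)\le 2\Delta e^{-2nt^2}$, choose $t=n^{-(1-\alpha)/2}$ (the paper's $A:=n^{-1/2+\alpha/2}$), and substitute monotonically into Lemma~\ref{lem:sample}. Those pieces are fine. However, the exponent comparison you left to ``one checks'' (and rightly flagged as the weak point) is false for every $\alpha\in(0,1/2)$:
\[
\frac{1}{2+4\alpha}-\frac{1-\alpha}{2}=\frac{1-(1-\alpha)(1+2\alpha)}{2(1+2\alpha)}=-\frac{\alpha(1-2\alpha)}{2(1+2\alpha)}>-\alpha(1-2\alpha),
\]
because $2(1+2\alpha)>1$. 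For $\alpha=1/4$ you get $-1/24$, not $-1/8$.

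Your fallback fails too. With $t=n^{-\alpha(1-2\alpha)}/\Delta$ and $\Delta^2\le n^{1/(1+2\alpha)}$, the Hoeffding exponent is $2n^{E}$ with $E=1-2\alpha(1-2\alpha)-\frac{1}{1+2\alpha}=\frac{8\alpha^3}{1+2\alpha}$. Then $E\ge\alpha$ iff $(4\alpha+1)(2\alpha-1)\ge 0$, i.e.\ iff $\alpha\ge 1/2$. In fact no tuning of $t$ can work. The probability target forces $t\ge n^{-(1-\alpha)/2}$ and the rate target forces $t\le n^{-\alpha(1-2\alpha)}/\Delta$. These are incompatible whenever $\Delta>n^{(1-3\alpha+4\alpha^2)/2}$. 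The hypothesis $n\ge\Delta^{2+4\alpha}$ permits such $\Delta$, since $\frac{1-3\alpha+4\alpha^2}{2}<\frac{1}{2+4\alpha}$ on $(0,1/2)$. So Hoeffding, a union bound and Lemma~\ref{lem:sample} cannot deliver the stated exponent under the stated hypothesis.

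The paper's proof has exactly the same hole. After setting $A=n^{-1/2+\alpha/2}$ it asserts the bound with $n^{-\alpha(1-2\alpha)}$, silently using the false inequality $A\Delta\le n^{-\alpha(1-2\alpha)}$. What the shared argument actually proves is the statement with $n^{-\alpha(1-2\alpha)}$ replaced by $n^{-\alpha(1-2\alpha)/(2(1+2\alpha))}$. This still tends to $0$, so $\gamma_n\to 1$ in Theorem~\ref{th:main} is unaffected. There are two ways to recover the stated rate:
\begin{itemize}
\item With $t=n^{-(1-\alpha)/2}$, strengthen the hypothesis to $n\ge\Delta^{2/(1-3\alpha+4\alpha^2)}$ (e.g.\ $n\ge\Delta^4$ for $\alpha=1/4$).
\item Keep the original hypothesis but use something sharper than Hoeffding, such as a Bernstein-type bound exploiting that each bin has mass of order $1/\Delta$. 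The price is $L$-dependent constants.
\end{itemize}
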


\begin{proof}
  
  The proof is based on Hoeffding inequality.
  
Let $X_1,\ldots,X_n$ be $n$ iid samples from the distribution with density $f$.
Let $Y^i_j = {\bf 1}(i/\Delta < X_j \leq (i+1)/\Delta)$.
For all $i$, $(Y^i_j)_{j=1...n}$ are iid random variables with values in $\{0,1\}$ and  mean $\int_{i/\Delta}^{(i+1)/\Delta} f(s) ds$.
We also denote the sum $S_n^i :=\sum_{j=1}^n Y^i_j$

Now by definition of the two shifted densities, for all $A$:
    \begin{align*}
        \mathbb{P}(d(\widehat{f_{n, \Delta}},f_\Delta )  \geq A) & = 
                                                                                     \sup_{i=0...\Delta-1} \mathbb{P} ( | (S_n^i /n- \int_{i/\Delta}^{(i+1)/\Delta} f(s) ds)| \geq A).
    \end{align*}
    Bounding the supremum by the sum, we get 
   \begin{align}
        \mathbb{P}(d(\widehat{f_{n, \Delta}},f_\Delta )  \geq A) & \leq
                                                                                     \sum_{i=0}^{\Delta-1} \mathbb{P} ( |(S_n^i /n- \int_{i/\Delta}^{(i+1)/\Delta} f(s) ds)  | \geq A) \label{eq1} \\
                                                                                   &\leq   \sum_{i=0}^{\Delta-1}  2 \exp(-2nA^2) \label{eq:Hoeffding} \\
      &=   2\Delta  \exp(-2nA^2),
         \end{align}
         where \eqref{eq:Hoeffding} uses Hoeffding inequality.

% We recall Heoffding's inequality:

% \begin{lemma}[Hoeffding inequality]
%     Let $X_1, \cdots, X_n$ be independent random variables such that $a_i \leq X_i
%     \leq b_i$ almost surely. Consider the sum of these random variables,
%     $S_n = X_1 + \cdots + X_n$.

%     Then Hoeffding's theorem states that, for all $t > 0$,
%     $$\mathbb{P}(S_n - \mathbb{E}(S_n) \geq t) \leq
%     \exp(-\frac{2t^2}{\sum_{i=1}^n (b_i - a_i)^2})$$
% \end{lemma}

% \begin{proof}
%     In our case, we have variables $X_{i,j}$ = "the sample $j$ lies between
%     $i/\Delta$ and $(i+1)/\Delta$, thus $a_i = 0$ and $b_i = 1$.

%     \begin{align*}
%         \mathbb{P}(\lVert \hat{f}_{n, \Delta} - f_\Delta \rVert_\infty \geq A)
%         &\leq \sum_{i=0}^{\Delta-1} \mathbb{P}(\sup_{i\Delta \leq t \leq
%         (i+1)\Delta}(\hat{f}_{n, \Delta}(t) - f_\Delta(t)) \geq  A) \\
%         &\leq \Delta \int_{i\Delta}^{(i+1)\Delta} \hat{f}_{n,
%         \Delta}(s) - f_\Delta(s) ds \geq
%         A) \\
%         &\leq \Delta \exp(-2n A^2) \\
%     \end{align*}

    We now tune the variables:  $A := n^{-\frac{1}{2} +
    \frac{\alpha}{2}}$ and $n \geq \Delta^{2+4\alpha}$. We get:

    \begin{align*}
        \mathbb{P}(d(\widehat{f_{n, \Delta}},f_\Delta ) \geq n^{-\frac{1}{2} +
        \frac{\alpha}{2}})
        &\leq 2\Delta \exp(-2n(n^{\frac{-1 +
        \alpha}{2}})^2) \\
        &\leq 2 \delta  \exp(-2n(n^{\frac{-1 +
        \alpha}{2}})^2) \\
        &\leq 2 \delta  \exp(-2n^\alpha) \\
    \end{align*}

    Using Lemma \ref{lem:sample}, we get that with probability greater than $1-2\Delta \exp(-2n^\alpha)$,
$$\frac{1 -
    \frac{n^{-\alpha (1-2\alpha)}}{L}}{1+\frac{n^{-\alpha (1-2\alpha)}}{\ell}}
    \nu_\Delta(i/\Delta)
    \leq \widehat{\nu_{n, \Delta}}(i/\Delta) \leq \frac{1 + \frac{n^{-\alpha (1-2\alpha)}}{\ell}}{1 -
    \frac{n^{-\alpha (1-2\alpha)}}{L}} \nu_\Delta(i/\Delta).$$
\end{proof}

\section{The (shifted) empirical Gittins index policy}

\subsection{Performance of the shifted empirical index policy}
\label{ssec:perf}

The fact that the shifted empirical index is close to the original one does not imply directly
that the performance of the policy based on this index is close to the performance of the policy using the real index because the choice of the next job is based on the comparison between indices that is not continuous in their  values.
For that we first construct a preemptive set $\XP^{\Delta,\epsilon} := \{ i/\Delta, s.t. i/\Delta < 1-\epsilon\}$.
In other words, preemption is not allowed if $x$ is above $1-\epsilon$ and if $x$ is not an integer multiple of $1/\Delta$.

The following result from \cite{ScullyPhD} (Theorem 16.5) gives a robustness guarantee of the performance of a policy based on indexes, that may not be the Gittins indexes:  \emph{Consider a stable M/G/1 queue  ($\lambda \E(X) <1$).
    Let $\nu$ be the Gittins index function derived from the  job
    size distribution. Assume there exist $\Gamma >0$ and a  priority policy $\pi$ using an index
    function $\nu'$ satisfying for all $t$,
    $$\frac{1}{\gamma} \nu(t) \leq \nu'(t) \leq \gamma \nu(t).$$
    Then the policy $\pi$ induces  a mean response time within a factor of $\gamma^2$ of the optimal:
$$\mathbb{E}(T_\pi) \leq \gamma^2 \mathbb{E}(T_{\textsc{Gittins}}).$$}

This result combined with our previous lemmas allow us to state our first main result.

\begin{theorem}\label{th:main}
    Consider a stable M/G/1 queue in which the scheduler does not observe individual
    job sizes. Assume that $f$ (the density of job size distributions) satisfies assumption \ref{ass:1}. Let $n$ be the number of samples, $\epsilon >0$, $1/2 > \alpha >0$ be two constants and let  $n \geq  \Delta^{2 +4\alpha} $.
     Define
      \[ \gamma_n  = \frac{1 + \frac{n^{-\alpha (1-2\alpha)}}{\ell}}{1 -
            \frac{n^{-\alpha (1-2\alpha)}}{L}} \frac{1 + \frac{L}{\Delta \epsilon
              \ell}}{1 - \frac{L}{\Delta \epsilon \ell}} \quad\mbox{ and }  \quad p_n = 2\Delta \exp(-2n\alpha),\]
              (note that $\gamma_n $ goes to 1  and $p_n$ goes to 0 when $\Delta$ goes  to $+\infty$).
              Let $\pi_{\textsc{shifted-samples}}$ be the priority policy using the  shifted empirical index function $\widehat{\nu_{n,\Delta}}$ and preemption set $\XP^{\Delta,\epsilon}$. Then, by denoting $K$ a bound on the expected response time of any work-conserving policy,
     $$   \mathbb{E}(T_{\textsc{shifted-samples}}) \leq \gamma_n^2
        \mathbb{E}(T_{\textsc{Gittins}}) + p_n K.$$
\end{theorem}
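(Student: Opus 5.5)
We split on the high-probability event on which the empirical index is uniformly close to the true Gittins index, and apply the robustness result of \cite{ScullyPhD} (Theorem 16.5) on that event. Let $G$ be the event that $d(\widehat{f_{n,\Delta}}, f_\Delta) < n^{-\frac12+\frac{\alpha}{2}}$. By the Hoeffding argument in the proof of Lemma~\ref{lem:probaSample}, $\pr(G^c) \leq 2\Delta\exp(-2n^\alpha)$. We identify this bound with $p_n$, reading the $2n\alpha$ in the statement as $2n^\alpha$. Conditionally on the samples, the scheduling policy is fixed, so we write
\[ \E(T_{\textsc{shifted-samples}}) = \E\big(T \mid G\big)\pr(G) + \E\big(T\mid G^c\big)\pr(G^c). \]
On $G^c$ the policy is still work-conserving, since it always serves some job. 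Its conditional mean response time is therefore at most $K$, and this term contributes at most $p_n K$.

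On $G$, we chain Lemma~\ref{lem:probaSample} and Lemma~\ref{lem:shifted} at every preemptible age $i/\Delta \in \XP^{\Delta,\epsilon}$. This gives
\[ \tfrac{1}{\gamma_n}\,\nu(i/\Delta) \leq \widehat{\nu_{n,\Delta}}(i/\Delta) \leq \gamma_n\, \nu(i/\Delta). \]
For the upper bound we use $\nu_\Delta\leq\nu$ together with the upper bound of Lemma~\ref{lem:probaSample}; the extra factor $\frac{1+L/(\Delta\epsilon\ell)}{1-L/(\Delta\epsilon\ell)}\geq 1$ only loosens it. For the lower bound we multiply the two lower factors. The lower factor of Lemma~\ref{lem:shifted} has exactly the right form. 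The lower factor of Lemma~\ref{lem:probaSample} is the reciprocal of its upper factor, so the product is at least $1/\gamma_n$. We also need $\frac{L}{\Delta\epsilon\ell}<1$ and $n^{-\alpha(1-2\alpha)}<L$ so that all factors are positive. This holds for $\Delta$ large, and otherwise the bound is trivial since $\gamma_n$ is infinite or negative.

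The real work is applying the robustness theorem, which is stated for index functions defined at all ages, whereas our policy only makes decisions on $\XP^{\Delta,\epsilon}$. We handle this by applying the theorem within the restricted-preemption model. There the relevant Gittins index at a preemptible age is still given by the supremum over stopping ages, because Scully's framework allows a general preemption set. We then extend $\widehat{\nu_{n,\Delta}}$ between grid points by its value at the previous grid point, since the policy never compares indices at non-preemptible ages. The benchmark $\E(T_{\textsc{Gittins}})$ should be read as the optimal policy for the preemption set $\XP^{\Delta,\epsilon}$. I expect justifying this reading, and checking that the two-sided index ratio at preemptible ages suffices for Theorem 16.5, to be the main obstacle. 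The first thing I would try is to restate the theorem with the preemption-set-adapted Gittins index, which by Lemma~\ref{lem:shifted} coincides with $\nu$ evaluated on the grid.

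Granting this, the theorem gives $\E(T\mid G)\leq \gamma_n^2\,\E(T_{\textsc{Gittins}})$. This uses that on $G$ the bound holds for the realized samples, and that new arrivals are independent of the samples. Since $\pr(G)\leq 1$, combining the two terms yields the claim.
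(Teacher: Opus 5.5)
Your proposal follows the paper's proof step for step. Both use the same Hoeffding good event, with $p_n$ read as $2\Delta\exp(-2n^\alpha)$, as the paper's proof also implicitly does. Both bound the bad event by $p_nK$ via work-conservation, chain Lemmas~\ref{lem:shifted} and~\ref{lem:probaSample} at grid ages into a two-sided $\gamma_n$ bound, and apply Theorem~16.5 of \cite{ScullyPhD} on the good event. The paper applies that theorem directly to the grid-point bounds against $\nu$ and never discusses the restricted preemption set you flag. Its only extra content is an explicit $K=\frac{\E(X)}{1-\rho}+\frac{\lambda\E(X^2)}{2(1-\rho)^2}$ from a busy-period and Pollaczek--Khinchine argument, which the statement does not require.

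One correction to the patch you suggest. The Gittins index $\nu^{\XP}$ adapted to $\XP^{\Delta,\epsilon}$ takes the supremum only over preemptible stopping ages and completion. So on the grid it is at most $\nu$ but in general not equal to it, and Lemma~\ref{lem:shifted} says nothing about it. What would suffice is the sandwich $\nu_\Delta\le\nu^{\XP}\le\nu$ at grid ages, which preserves the factor $\gamma_n$. The lower inequality follows from $\bar F_\Delta\geq\bar F$ for stopping ages below $1-\epsilon$ and at completion, but stopping ages in $[1-\epsilon,1)$ need a separate argument.

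Also, when $L<\Delta\epsilon\ell$ or $n^{-\alpha(1-2\alpha)}<L$ fails, the bound does not become trivial, since only $\gamma_n^2$ appears. These conditions should simply be assumed, as the statement implicitly does.
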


\begin{proof}
    From Lemma \ref{lem:shifted} we get for all states in the preemptive set $\XP^{\Delta.\epsilon}$,

    $$\nu(\frac{i}{\Delta}) \frac{1 - \frac{L}{\Delta \ell \epsilon}}{1 + \frac{L}{\Delta
        \epsilon \ell}}\leq \nu_\Delta(\frac{i}{\Delta}) \leq \nu(\frac{i}{\Delta}).$$

    Now, let us  define the good event $\mathcal{E}_n = \{ \lVert \widehat{f_{n, \Delta}} - f_\Delta \rVert_\infty\leq n^{-1/2 +\alpha/2} \}$. Lemma \ref{lem:probaSample} says the $\pr(\mathcal{E}_n ) \geq 1-p_n$.

    Under the good event $\mathcal{E}_n$, 

    $$\frac{1 -
    \frac{n^{-\frac{\epsilon}{2}}}{L}}{1+\frac{n^{-\alpha (1-2\alpha)}}{\ell}}
    \nu_\Delta(i/\Delta)
    \leq \hat{\nu}_{n, \Delta}(i/\Delta) \leq \frac{1 + \frac{n^{-\alpha (1-2\alpha)}}{\ell}}{1 -
    \frac{n^{-\alpha (1-2\alpha)}}{L}} \nu_\Delta(i/\Delta).$$

    Thus setting  $\gamma = \frac{1 + \frac{n^{-\alpha (1-2\alpha)}}{\ell}}{1 -
            \frac{n^{-\alpha (1-2\alpha)}}{L}} \frac{1 + \frac{L}{\Delta  \epsilon
              \ell}}{1 - \frac{L}{\Delta  \epsilon \ell}}$  we have
          \begin{equation} \label{eq:both}
            \frac{1}{\gamma} \hat{\nu}_{n} (i/\Delta) \leq \nu(i/\Delta)
    \leq \gamma \hat{\nu}_n(i/\Delta). 
  \end{equation}

    We now use $K$ a bound on the response time of any work-conserving scheduling policy in a M/G/1 queue and the robusteness theorem of \cite{ScullyPhD} previously given.  to conclude:

    \begin{align*}
      \E(T_{\textsc{shifted-samples}}) &=  \E(T_{\textsc{shifted-samples}} \one_{\mathcal{E}_n} + T_{\textsc{shifted-samples}}
                                        \one_{\mathcal{E}_n^{\mathsf{c}}})\\
      & \leq \E(T_{\textsc{shifted-samples}} \one_{\mathcal{E}_n} ) +  K \E( \one_{\mathcal{E}_n^{\mathsf{c}}}) \\
                                      & \leq  \gamma_n^2 \E(T_{\textsc{Gittins}}) + p_n K,
    \end{align*}
    where first inequality follows from the definition of $K$ and the last one uses Eq. \eqref{eq:both} and Theorem 16.5 in \cite{ScullyPhD}.
      
    % that   $  \mathbb{E}(T_{\textsc{shifted-sample}}) \leq \gamma_n^2
    %     \mathbb{E}(T_{\textsc{Gittins}}).$
    %   With probability $p_n$,  $\mathbb{E}(T_{\textsc{Sampled}}) $ cannot be bounded by   $ \mathbb{E}(T_{\textsc{Gittins}}).$ However, it is obviously bounded by $K$, the expected response time of the worse possible scheduling policy in a M/G/1 queue.

        % For now, let us assume that $K$ is finite. We get the final result,     $\mathbb{E}(T_{\textsc{Sampled}}) \leq \gamma_n^2 \mathbb{E}(T_{\textsc{Gittins}}) + p_n K.$

    The remaining point is to  find an explicit  bound on the mean response time in a stable M/G/1 queue, under any work-conserving policy. The following  is pure queueing theory and can be seen as a more or less direct consequence of the study of the M/G/1 queue in \cite{Mor}. The following derivation  was given to us by Ziv Scully.
    Everything  holds as soon as (i) the size distribution has a finite mean $1/\mu$ smaller than the mean interarrival time $1/\lambda$ ($\rho :=\lambda/\mu < 1$) and (ii) it has a finite variance (which  is true here since $f$ has a finite support).
    Let us consider the workload $W(t)$ in the M/G/1 queue at time $t$.  $W(t)$ is the same under of any work conserving policy $\pi$, and so are  the busy periods (periods where $W(t) > 0$).
    Therefore, the response time of any policy $\pi$ is bounded by the length of a busy period $B(W_a+X)$ starting with $W_a$ (the work present in the system when a job arrives)  plus $X$ (the size of the job).
    This means $\E(T_\pi) \leq  \E(B(W_a+X)) = \E(W_a + X)/ (1-\rho)$.
    
    Using  PASTA, the workload at job arrivals $W_a$ has the same law as  the time-average worload $W$. The Pollaczek–Khinchine formula gives 
    $\E(W) = \frac{\lambda \E(X^2)}{2(1-\rho)}$.
    
    Putting everything together,
    $\E(T_\pi) \leq  \frac{\E(X)}{1-\rho} + \frac{\lambda\E(X^2)}{2(1-\rho)^2} =:K$.
   
  \end{proof}

  \subsection{Empty non-preemptive set}
  \label{ssec:no-preemption}
 So far, we considered the case where the preemptive set is restricted to $\XP^{\Delta,\epsilon}$.
  As announced  at the beginning, we now consider the case where the set $\XP = \cX$ and $\XNP = \emptyset$.
  The robusteness theorem in  \cite{ScullyPhD} is not directly applicable here because outside states in   $\XP^{\Delta,\epsilon} $, 
  the indices are quite different, as seen in  Figure \ref{fig:index}:  the empirical index approaches infinity while the true index remains bounded as the age approaches $i/\Delta$ from below.

  To deal with 
  $\XP = \cX$, we will let the discretization step $\Delta$ grow with $n$, instead of keeping it fixed and we also let $\epsilon$ go to $0$.
  To comply with the assumptions in Theorem \ref{th:main}, we typically choose $\Delta$ to grow roughly  like $ n^{2}$ and $\epsilon$ grow like $\sqrt{\Delta}$.
  The next theorem gives  the asymptotic behavior of our empirical Gittins policy without any restrictions on preemptions.

  %Let us denote by $c(n,\Delta)$ the average response time  under  the empirical index policy with  $\XP^\Delta  = \{ i/\Delta, i\in \N\}$.

  % Theorem \ref{th:} says that  $c(n,\Delta)$  converges to $c^*(\Delta)$, the optimal response time  with  $\XP^\Delta  = \{ i/\Delta, i\in \N\}$, for all $\Delta \in N$.

\begin{theorem}
    Consider a stable M/G/1 queue in which the scheduler does not observe individual
    job sizes. Assume that $f$ (the density of job size distributions) satisfies assumption \ref{ass:1}. Let $n$ be the number of samples, $\epsilon >0$, $1/2 > \alpha >0$ be the two constants as before and let  $ \Delta = n^{2/(1+8\alpha)}$ and $\epsilon = 1/\sqrt{\Delta}$. Under preemption set $\XP = \cX$, as $n$ goes to infinity, 
     $  \mathbb{E}(T_{\textsc{shifted-samples}})$ converges to 
       $ \mathbb{E}(T_{\textsc{Gittins}})$.
\end{theorem}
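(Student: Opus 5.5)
We prove the convergence by a sandwich. The lower bound $\E(T_{\textsc{shifted-samples}}) \geq \E(T_{\textsc{Gittins}})$ holds for every $n$, since the Gittins policy with $\XP = \cX$ is optimal among all non-clairvoyant policies (\cite{Scully2021}) and $\pi_{\textsc{shifted-samples}}$ is one such policy. It therefore remains to show $\limsup_n \E(T_{\textsc{shifted-samples}}) \leq \E(T_{\textsc{Gittins}})$.

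\textbf{Step 1: the policy effectively only preempts on the grid.} The shifted empirical distribution $\widehat{\phi_{n,\Delta}}$ is purely atomic on multiples of $1/\Delta$. Hence for $x \in (k/\Delta, (k+1)/\Delta)$ the supremum in \eqref{eq:Gittins} is attained at $y=(k+1)/\Delta$, which gives
\[
\widehat{\nu_{n,\Delta}}(x) = \frac{\widehat{\phi_{n,\Delta}}(\{(k+1)/\Delta\})}{((k+1)/\Delta - x)\,\widehat{\bar F_{n,\Delta}}(x)}
\]
whenever the shifted sample has a mass there. This index is nondecreasing in $x$ on the open interval. On the good event $\mathcal{E}_n$ every atom is positive because $\ell/\Delta > A = n^{-1/2+\alpha/2}$, and this is exactly where the choice $\Delta = n^{2/(1+8\alpha)}$ must be checked. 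Consequently, once a job is started at a grid age it keeps the highest index until the next grid point, unless a new arrival at age $0$ has a larger index. I will argue that arrivals have index $\widehat{\nu_{n,\Delta}}(0)$, which is the grid value, so a job that beat it at the grid point still beats it inside the interval. So on $\mathcal{E}_n$, and for ages below $1-\epsilon$, the policy with $\XP=\cX$ makes exactly the same decisions as the policy with preemption set $\XP^{\Delta,\epsilon}$.

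For ages in $[1-\epsilon,1]$ the two policies may differ. This only concerns work of total mass at most $\epsilon$ per job, arriving with probability $\bar F(1-\epsilon) \leq L\epsilon$. I expect to bound the perturbation of the mean response time by $O(\epsilon)K$, by noting that the difference between the two policies only reorders work in these final stretches. Making this rigorous is the main obstacle. A first attempt would compare both policies to the one in Theorem~\ref{th:main} through a coupling of workloads, using the fact that $\bar F(1-\epsilon)\to 0$ and that the indices near the end of the support are large (of order $1/\epsilon$), so these jobs are essentially run to completion under both policies.

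\textbf{Step 2: apply Theorem~\ref{th:main}.} With $\Delta = n^{2/(1+8\alpha)}$, I will check that $n \geq \Delta^{2+4\alpha}$ holds for an adjusted $\alpha$; if not, I replace $\alpha$ by a smaller admissible constant. I will also check that $L/(\Delta\epsilon\ell) = L/(\sqrt{\Delta}\,\ell) \to 0$. Then $\gamma_n \to 1$ and $p_n = 2\Delta e^{-2n^\alpha} \to 0$, since $\Delta$ is polynomial in $n$. Theorem~\ref{th:main} gives an upper bound of $\gamma_n^2\E(T_{\textsc{Gittins}}^{\Delta,\epsilon}) + p_nK$, where the Gittins benchmark is for the restricted preemption set. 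That benchmark is itself at most $\E(T_{\textsc{Gittins}})$ plus a term that vanishes as $\Delta\to\infty$ and $\epsilon\to0$. To see this, restrict the Gittins policy to preempt only at the next grid point: each job delays others by at most $1/\Delta$ per decision, and the tail non-preemptive block adds $O(\epsilon)$, which is controlled again through $K$. Combining Steps 1 and 2 with the lower bound yields the convergence.
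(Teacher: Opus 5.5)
Your lower bound (optimality of Gittins for $\XP=\cX$) and the end of Step~2 (Gittins with grid-only preemptions is within a vanishing term of Gittins on $\cX$) are in line with the paper's inequality \eqref{eq:compaPreempt}. The gap is in Step~1, which your route adds in order to transfer Theorem~\ref{th:main} to $\XP=\cX$.

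Monotonicity of $\widehat{\nu_{n,\Delta}}$ on each interval $(k/\Delta,(k+1)/\Delta)$ is correct: for $y\geq(k+1)/\Delta$ the mass of $(x,y]$ does not depend on $x$, while the denominator decreases. Your closed formula is not correct, though. Already for the uniform density, the supremum at ages just above $k/\Delta$ is attained at the end of the support, not at $(k+1)/\Delta$.

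The real problem is the conclusion that no off-grid preemption occurs. The sentence ``a job that beat it at the grid point still beats it'' presupposes that an age-$0$ job was present when $J$ was selected at $k/\Delta$. Suppose instead that $J$ was selected without such a competitor and $\widehat{\nu_{n,\Delta}}(k/\Delta)<\widehat{\nu_{n,\Delta}}(0)$. This is typical for a bimodal density, whose index dips after the first mode. Then a job arriving while $J$ has an age $x$ with $\widehat{\nu_{n,\Delta}}(x)<\widehat{\nu_{n,\Delta}}(0)$ preempts $J$ off the grid, which the $\XP^{\Delta,\epsilon}$ policy cannot do. From then on the two sample paths differ, so there is no identity of decisions through which Theorem~\ref{th:main} transfers. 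You would need an actual coupling or perturbation bound for these off-grid preemptions.

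The tail $[1-\epsilon,1]$ is likewise left open, and the $O(\epsilon)K$ estimate is only a heuristic. The paper handles the tail with a threshold $\epsilon_c$ such that $\nu(x)>\nu(y)$ whenever $x>1-\epsilon_c>y$. This makes the tail restriction vacuous for the Gittins policy. Your route would need an analogue for $\widehat{\nu_{n,\Delta}}$, with a margin that survives the factor $\gamma_n$.

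The parameter check you postpone in Step~2 also fails. With $\Delta=n^{2/(1+8\alpha)}$ one gets $\Delta^{2+4\alpha}=n^{(4+8\alpha)/(1+8\alpha)}>n$ for every $\alpha$. So the hypothesis $n\geq\Delta^{2+4\alpha}$ of Theorem~\ref{th:main} never holds. Decreasing $\alpha$ makes it worse, because it increases the exponent of $\Delta$.

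If you decouple the constant in $\Delta$ from the constant $\alpha'$ fed to Theorem~\ref{th:main}, you need $\alpha'\leq\alpha-3/8$. This is possible only for $\alpha\in(3/8,1/2)$. For $\alpha<1/8$ one even has $\Delta>n$: most bins are empty, and the control $A\Delta\to0$ on which Lemma~\ref{lem:sample} relies is lost. The positivity condition $\ell/\Delta>n^{-(1-\alpha)/2}$ also fails for every $\alpha\in(0,1/2)$, though monotonicity does not need it.

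So, going through Theorem~\ref{th:main}, your argument covers only $\alpha\in(3/8,1/2)$. More generally it covers choices such as $\Delta\leq n^{1/(2+4\alpha')}$ with $\epsilon=\Delta^{-1/2}$.

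For context, the paper's own proof addresses neither point. It combines \eqref{eq:compaPreempt} with Theorem~\ref{th:main}, which concerns the empirical policy with grid preemptions. A proof of the statement as posed, with $\XP=\cX$ for the empirical policy, needs both.
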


\begin{proof}
  Let us denote by $\pi(\nu)$ the scheduling policy index the true index $\nu$.

  The first step is to remove the preemption restriction $x > 1-\epsilon$.
  Under Assumption \ref{ass:1} on the size distribution, the rank function  is continuous and remains bounded away from $0$ in $[0,1)$. Furthermore, $\nu(x) > 1/1-x$ for all $x\in (0,1)$. Therefore, if $\epsilon$ is small enough (smaller than a critcal value $\epsilon_c$), then $\nu(x) > \nu(y)$ for all $x> 1-\epsilon_c$ and all $y < 1-\epsilon_c$. This implies that if a job reaches age  $x > 1-\epsilon_c$ then the job will not be preempted before completion even is preemption is allowed, so that restricting the preemption after $1-
  \epsilon_c$ has no impact on the execution of these large jobs as long as $\epsilon < \epsilon_c$.

    In the second step, we now focus on one  busy period of the M/G/1 queue (remember that this busy period does not depend on the work-conserving policy used to scedule jobs).
    under  $\pi(\nu)$, the busy period is decomposed into a finite number of time intervals, corresponding to which jobs are executed at any time (this may include time intervals with processor sharing.
    The end points of these intervals are of two types: preemptions and completions of a job.
    
  Now, let us construct a new scheduling policy $\pi_\Delta$ that mimics $\pi(\nu)$ but  shifts  all the preemption end points to next point in $ \{ i/\Delta, i\in \N\}$. As for the completion end points, they are moved to be the new completion times under the new policy. This policy is not work conserving (it may have to wait to start a new job after a completion, if this completion happens  before the conpletion under the Gittins policy) and it is not an index policy
  The policy $\pi^\Delta$ only preempts at points in  $\XP^\Delta$ so its response time is greater than the response time of the optimal polcy under   $\XP^\Delta$, namely $\pi(\nu_\Delta)$. On the other hand, its response time is close to the response time of $\pi(\nu)$, missing the mark on the response time for each job by only a gap smaller than $m^2 /\Delta$, where $m$ is the number of  intervals in the busy period for the job. On average, by explicitly mentionning the preemption set, 
  
  \begin{equation}
    \mathbb{E}(T_{\textsc{Gittins},\cX})  + M_2/\Delta \geq \mathbb{E}(T_{\textsc{Gittins},\XP^\Delta}) \geq 
    \mathbb{E}(T_{\textsc{Gittins},\cX}), \label{eq:compaPreempt}
    \end{equation}
where $M_2$ is the expectation of $m^2$ with $m$ the number of intervals in a busy period under  $\pi(\nu)$.
Finally, when $\Delta$ goes to infinity, this implies that the performance of the policy under discrete preemption times  converges to the perfomance  of the  policy $\pi(\nu)$ under all preemption times.
Finally, since  the performance of the policy using empirical estimates is close to the  performance of the Gittins policy under discrete preemptions when $n$ grows, this implies together with Eq. \eqref{eq:compaPreempt}, that it also converges to the performance of the Gittins policy when $\XP = \cX$.
\end{proof}
  
\section{Numerical comparisons}
\label{sec:numerical}

\subsection{Trajectorial comparison with the Gittins index policy}

In our first experiment, we compare a trajectory of two scheduling policies the first uses the true Gittins index $\nu$, while the second policy uses the shifted empirical index $\widehat{\nu_{n,\Delta}}$, with $n = 1000$ and $\Delta = 31$. For this experiment, we generate the size of the jobs according to a Pareto($2$) conditioned on being smaller than $5$. The CDF as well as the Gittins index for this particular distribution are displayed in Figure~\ref{fig:cdf}(b) and Figure~\ref{fig:index}(b).  In this Figure, we use the same set of 10 jobs, starting with an empty system. The jobs are generated using a Poisson arrival process with rate $\lambda = 3$. The size of each job is generated according to a Pareto(2) distribution conditioned on being less than $5$. In Figure~\ref{fig:traj}, we couple the arrival time and the sizes of the jobs and compare the execution of the schedule obtain when using the true Gittins index (in Panel~(a)) and the shifted empirical Gittins (in Panel~(b)). In all cases, the gray areas corresponds to the time when the jobs are present in the system but not executed. 

When running our simulation, we assume that only one job can be served at a time and we set a minimal execution time of $dt=0.01$. This means that when a job is scheduled to run, it cannot be interrupted during the next $dt$ time units. In Figure~\ref{fig:traj}(a), we observe that the true Gittins policy would sometimes like to schedule up to four jobs at the same time (for instance from time $2$ to $2.5$), is a processor-sharing fashion. Instead, our schedule alternates between the four jobs (based on Gittins index).  When using the shifted empirical Gittins (Panel~(b)), the situation is different because each job is scheduled for at least $5/\Delta\approx0.16$ unit of time. Hence, in this case the alternance between the various job sizes are less frequent. This figure illustrates that the schedule used by the two policies can be quite different, 

    % The Figure \ref{fig:traj} shows the resulting execution sequence of the jobs.
    % One can spot some differences between both schedule, in particular, under
    % the true Gittins policy, some jobs have the same index and are executed
    % together (in PS fashion): For example jobs 0, 1 and 2, are executed
    % together under $\nu$ while execution is sequential under
    % $\widehat{\nu_{n,\Delta}}$, albeit quite interleaved.
    % These differences do not result in big differences for response times, as shown in the following Table.

%     \begin{center}
%         \begin{tabular}{| c | c | c | c | c | c | c | c | c |}
%         \hline
%         Job number   & 0 & 1 & 2 & 3 & 4 & 5 & 6 & 7 \\
%         Response time Gittins & 1.59 & 0.25 & 0.31 & 0.77 & 0.18 & 0.37 & 0.02 &
%         0.03 \\
%         Response time shifted samples Gittins & 1.51 & 0.14 & 0.18 & 1.46 & 0.24
%                                               & 0.21 & 0.06 & 0.03 \\
%         \hline
%     \end{tabular}
%   \end{center}

\begin{figure}[htbp]
    \centering
    \begin{subfigure}[b]{0.47\textwidth}
        \includegraphics[width=\textwidth]{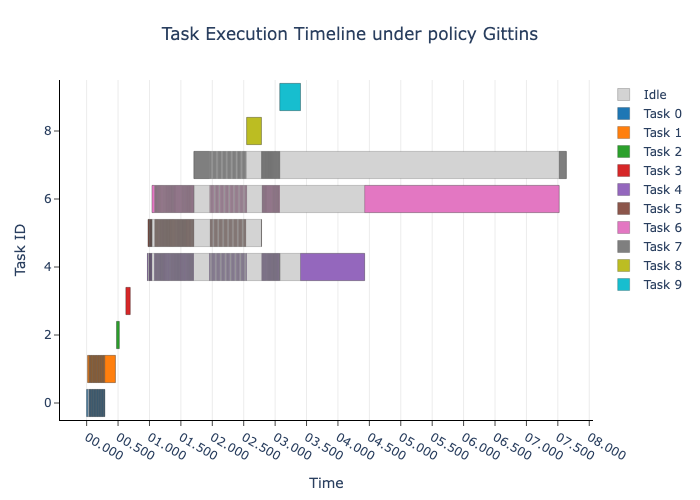}
        \caption{True Gittins policy.}
        \label{fig:image1}
    \end{subfigure}
    \hfill
    \begin{subfigure}[b]{0.47\textwidth}
        \includegraphics[width=\textwidth]{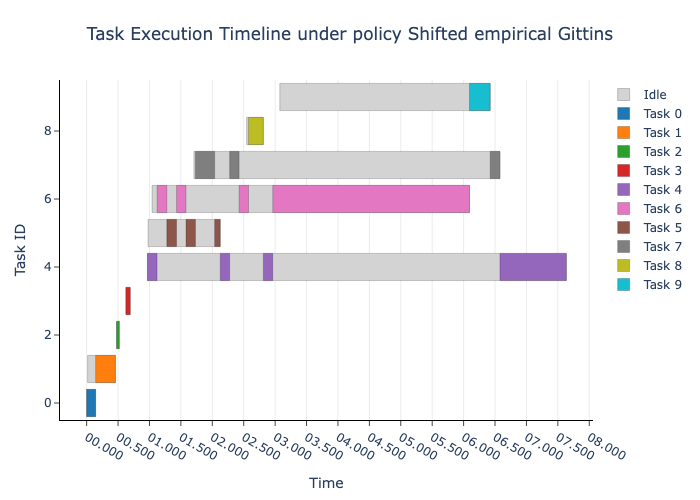}
        \caption{Shifted empirical Gittins ($n=1000$ samples).}
        \label{fig:image2}
    \end{subfigure}
    \caption{Task execution timeline for two scheduling policies (Gittins and shifted empirical Gittins).}
    \label{fig:traj}
  \end{figure}

\subsection{Average response time}

Our second set of experiments compares the average response times under several alternative policies. We compare three main policies:
  \begin{itemize}
    \item The true Gittins index $\nu$ (which provides the optimal response time)
    \item The shifted empirical index $\widehat{\nu_{n,\Delta}}$ (the policy studied in this paper that is asymptotically optimal as $n$ goes to infinity).
    \item The empirical Gittins index $\widehat{\nu_n}$, obtained by computing the Gittins index from the empirical distribution and whose performance guarantees are provided in \cite{ramakrishna2026empirical}.
  \end{itemize}
  To make a more complete comparison, we also add two other heuristics to which we compare the performance of our indices:
  \begin{itemize}
    \item \emph{Gaussian KDE Gittins} -- An index policy where indexes are computed using the convolution distribution $f_G = \widehat{f_n} \star G$, where $G$ is a Gaussian kernel with a small variance.  The kernel method  is often used to estimate the true distribution $f$ using $n$ samples (see for example, \cite{Rudemo}). The idea of this heuristic is to smoothen the empirical distribution and avoid the ``spikes'' of the sampled Gittins. 
    \item \emph{FCFS} (first-come-first-served). We add this policy only as a baseline comparison. It is not expected to perform well if the job size has a decreasing failure rate but can perform well otherwise.  
%   %\item $\pi(\widehat{\nu_n})$ is the policy using the index $\widehat{\nu_n}$, the index of the empirical distribution (made of $n$ atoms).

% %  \item $\pi(\widehat{\nu^N_{n}})$ uses the index obtained by a different shifting procedure.
%     Instead of shifting each sample to the next discrete point on  a regular grid, samples are clustered (to their rights) by packets  of size  $N$.
%     More formely, all samples are ordered by their size and  the $i$th greatest sample $X_i$ is shifted to become equal to $X_{\lceil i/N \rceil N}$.

%   \item $\pi(\widehat{\nu_{n,{\min}}})$ uses the index  $\widehat{\nu_{n,{\min}}}(t) := \min_{u\in [t-h,t+h]} \widehat{\nu_n}(u)$, for a well chosen small $h$. % Note that this index function is pessimistic about the future failure rate of the job. This property is shared by the shifted empirical index. \red{Est-ce qu'il faudrait vraiment mettre ça?}
   
  \end{itemize}
  
%   For reference purposes we also display  the performance of the policy using the true index $\nu$ (that knows the distribution), the performance of $SRPT$ (that knows the sizes) and two oblivious policies, namely FCFS (First Come First Served) and LCFS (Last come first Served).

\begin{figure}[htbp]
    \centering
    \begin{tabular}{@{}c@{}c@{}}
        \begin{subfigure}[b]{.5\textwidth}
            \includegraphics[width=\textwidth]{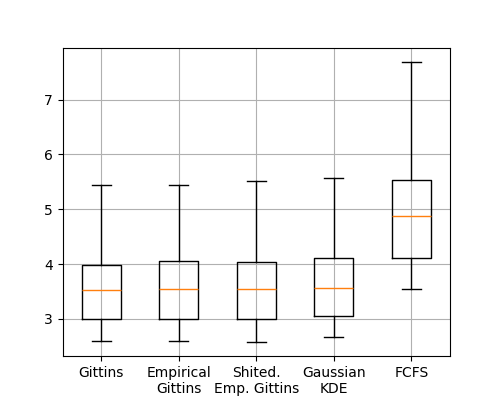}
            \caption{Distribution $\beta_{12, 50} + \beta_{10, 2}$}
            \label{fig:image4}
        \end{subfigure}
        &
        \begin{subfigure}[b]{.5\textwidth}
            \includegraphics[width=\textwidth]{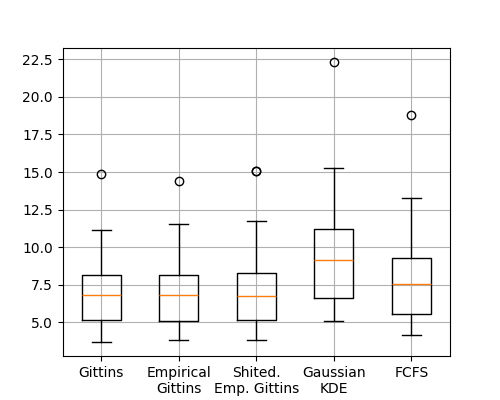}
            \caption{Pareto(2) distribution.}
            \label{fig:compa}
        \end{subfigure}
    \end{tabular}
     \hfill
     
     \caption{Boxplot of average services times for different distributions.}
     \label{fig:side_by_side2}
\end{figure}

The experiment shows that the performance of the shifted empirical Gittins and of the empirical Gittins match very closely the performance of the true Gittins index policy. For the shifted empirical Gittins, this illustrates our result that shows that the empirical Gittins policy is asymptotically optimal. For empirical Gittins, this illustrate the result of \cite{ramakrishna2026empirical}.  This results should be compared to the case of the Gaussian KDE Gittins and of FCFS that perform significantly worse.

\bibliographystyle{plain}
\bibliography{biblio}

\end{document}